%
%
%
%
%
%
\documentclass[]{siamart171218}       
\usepackage{todonotes}
\usepackage{algorithmic}
\usepackage{algorithm}
\usepackage{amsmath}
\usepackage{amssymb}
\usepackage{graphicx}
\usepackage{listings}
\lstset{language=C++}
\lstset{basicstyle=\small}
\lstset{numbers=left}
\usepackage{wrapfig}
\usepackage{tikz}

\newcommand{\X}{{\bf x}}
\newcommand{\Y}{{\bf y}}
\newcommand{\Z}{{\bf z}}
\newcommand{\V}{{\bf v}}
\newcommand{\W}{{\bf w}}
\newcommand{\A}{{\bf a}}
\newcommand{\B}{{\bf b}}
\renewcommand{\S}{{\bf s}}
\newcommand{\T}{{\bf t}}

\newcommand{\R}{{ {\rm I} \kern -.25em {\rm R} }}

\newcommand{\ass}{=}

\usepackage{graphicx}
%
%
%
%
%
\begin{document}

\title{A Note on Adjoint Linear Algebra}

\author{Uwe Naumann \thanks{Department of Computer Science, RWTH Aachen University, 52056 Aachen, Germany, \email{naumann@stce.rwth-aachen.de}  }}

\maketitle

\begin{abstract}
A new proof for adjoint systems of linear equations is presented. The argument
is built on the principles of Algorithmic Differentiation. 
Application to scalar multiplication sets the base line.
Generalization yields adjoint inner vector, matrix-vector, and matrix-matrix 
products leading to an alternative proof for first- as well as higher-order 
adjoint linear systems. 
\end{abstract}

\begin{keywords} algorithmic differentiation, adjoint, linear algebra \end{keywords}
	\begin{AMS} 15A06, 15A29, 26B05 \end{AMS}

\section{Motivation}
\label{sec:motivation}

Algorithmic Differentiation \cite{Griewank2008EDP,Naumann2012TAo}
of numerical programs builds on a set of
elemental functions with known partial derivatives with respect to their 
arguments at the given point of evaluation.
The propagation of adjoint derivatives relies on the associativity of the 
chain rule of differential calculus. Differentiable combinations of 
elemental functions yield higher-level elementals. Efficient implementation 
of AD requires the highest possible level of elemental functions.

Basic AD assumes the set of elemental functions to be formed by the arithmetic
operators and intrinsic functions built into the given programming language.
While its application to linear algebra methods turns out to be straight 
forward basic AD is certainly not the method of choice from the point of
view of computational efficiency. Elementals of the highest possible level
should be used. Their derivatives should be formulated as functions of
high-level elementals in order to exploit benefits of corresponding 
optimized implementations.

Following this rationale this note presents a new way to derive 
adjoint systems of linear equations based on adjoint Basic Linear Algebra Subprograms (BLAS) \cite{Lawson1979BLA}. 
It is well known (see \cite{Giles2008CMD} and references therein) that 
for systems $A \cdot \X = \B$ of $n$ linear equations 
with invertible $A$ and {\em primal} solution
	$\X \ass A^{-1} \cdot \B$
first-order {\em adjoints} $A_{(1)}$ of $A$ 
(both $\in \R^{n \times n}$ with $\R$ denoting the real numbers)
and $\B_{(1)}$ of $\B$ (both $\in \R^n$) 
can be evaluated at the primal solution $\X \in \R^n$ as 
\begin{equation} \label{als}
	\left (
	\begin{split}
		\B_{(1)} &\ass A^{-T} \cdot \X_{(1)} \\
		A_{(1)} &\ass -\B_{(1)} \cdot \X^T 
	\end{split}
	\right ) \; .
\end{equation}
The main contribution of this note is an alternative
proof for Eqn.~(\ref{als}) that builds naturally on the adjoint 
BLAS used 
in the context of state of the art AD. 
For consistency with related work we follow the notation 
in \cite{Naumann2012TAo}, that is, $v^{(1)} \in \R$ denotes the value of the
first-order directional derivative (or tangent) associated with a 
variable $v\in \R$ and $v_{(1)} \in \R$ denotes the value of its adjoint.

\section{Prerequisites}

The Jacobian $\nabla F = \nabla F(\X) \equiv \frac{d F}{d \X}(\X) \in \R^{m \times n}$ of a differentiable implementation of
$\Y \ass F(\X) : \R^n \rightarrow \R^m$ as a computer program induces a 
linear mapping
$\Y^{(1)} \ass \nabla F \cdot \X^{(1)}: \R^n \rightarrow \R^m$ implementing the
tangent of $F.$ 
The corresponding {\em adjoint} operator $\nabla F^*=\nabla F^*(\X)$ is formally defined via
the inner vector product identity 
\begin{equation} \label{eqn:adj}
	\langle \nabla F \cdot \X^{(1)},\Y_{(1)} \rangle 
	=
	\langle \X^{(1)}, \nabla F^* \cdot \Y_{(1)} \rangle 
\end{equation}
yielding
$\nabla F^* =\nabla F^T$ \cite{MR0117523}.
In the following all (program) variables are assumed to be 
alias- and context-free, that is, distinct variables do not overlap
in memory and $F$ is assumed to be not embedded in an enclosing 
computation. We distinguish between {\em active} and {\em passive} variables. 
Derivatives of all active outputs of the 
given program are computed with respect to all active inputs. We are not 
interested in derivatives of passive outputs nor are we computing derivatives 
with respect to passive inputs.

\section{BLAS Revisited}

In its basic form AD builds on known tangents and adjoints of the arithmetic
functions and operators built into programming languages. Tangents and 
adjoints
are propagated along the flow of data according to the chain rule of 
differential calculus.
We enumerate entries of vectors $\V \in \R^n$ staring from zero
as $v_0,\ldots,v_{n-1}.$

From the perspective of AD adjoint versions of higher-level BLAS are derived as
adjoints of lower-level BLAS. Optimization of the result aims for 
implementation using the highest possible level of BLAS. For example, 
adjoint matrix-matrix multiplication (level-3 BLAS) is derived from adjoint 
matrix-vector multiplication (level-2 BLAS) yielding efficient evaluation as 
two matrix-matrix products (level-3 BLAS) as shown in Lemma~\ref{lem:AX}.
Rigorous derivation of this result requires bottom-up investigation of the BLAS hierarchy. We start with basic scalar multiplication (Lemma~\ref{lem:ax}) 
followed by the inner vector (Lemma~\ref{lem:aTx}) and matrix-vector (Lemma~\ref{lem:Ax}) products as prerequisites for the matrix-matrix product.

\begin{lemma} \label{lem:ax}
The adjoint of scalar multiplication
$y \ass a \cdot x$ with active $a,x,y \in \R$ is computed as
\begin{equation} \label{eqn:ax}
\begin{split}
a_{(1)} &\ass x \cdot y_{(1)} \\
x_{(1)} &\ass a \cdot y_{(1)}
\end{split} 
\end{equation}
for $y_{(1)} \in \R$  yielding $a_{(1)},x_{(1)} \in \R.$ 
\end{lemma}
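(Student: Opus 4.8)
The plan is to treat the scalar product as the elemental function $F : \R^2 \rightarrow \R$ with active inputs $(a,x)$ and active output $y = F(a,x) = a \cdot x$, and then to invoke the adjoint identity established in the Prerequisites, namely $\nabla F^* = \nabla F^T$ derived from Equation~(\ref{eqn:adj}). Everything reduces to writing down the $1 \times 2$ Jacobian and transposing it.

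First I would compute the Jacobian by partial differentiation: since $\partial y / \partial a = x$ and $\partial y / \partial x = a$, we obtain the row vector $\nabla F = (x, a)$. The associated tangent is then $y^{(1)} = \nabla F \cdot (a^{(1)}, x^{(1)})^T = x \cdot a^{(1)} + a \cdot x^{(1)}$, which records how the two active inputs contribute linearly to a perturbation of the output.

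Next I would apply $\nabla F^* = \nabla F^T$ to map the output adjoint $y_{(1)}$ back to the input adjoints. Stacking $a_{(1)}$ and $x_{(1)}$ into a column vector and multiplying the transposed Jacobian by the scalar $y_{(1)}$ gives $(a_{(1)}, x_{(1)})^T = (x, a)^T \cdot y_{(1)}$, from which the two claimed identities are read off componentwise. As an independent check I would verify the defining inner-product identity directly, confirming that $\langle \nabla F \cdot (a^{(1)}, x^{(1)})^T, y_{(1)} \rangle = \langle (a^{(1)}, x^{(1)})^T, \nabla F^T \cdot y_{(1)} \rangle$ holds for all choices of tangents and adjoints, which it does since both sides equal $(x \cdot a^{(1)} + a \cdot x^{(1)}) \cdot y_{(1)}$.

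There is no genuine obstacle in the calculation itself; the only point demanding care is the bookkeeping of activity. Because \emph{both} $a$ and $x$ are active, the domain of $F$ must be taken as $\R^2$ rather than $\R$, so that the Jacobian is a row vector with two entries rather than a single derivative. Getting this dimension right---and hence producing two separate adjoint equations rather than one---is the substantive content of the lemma, and it is precisely the pattern that will generalize to the inner-product, matrix-vector, and matrix-matrix cases treated in the subsequent lemmas.
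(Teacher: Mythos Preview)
Your proposal is correct and follows essentially the same approach as the paper: compute the $1\times 2$ Jacobian $(x,a)$ of $y=a\cdot x$, write the tangent $y^{(1)}$, and use the adjoint identity from Equation~(\ref{eqn:adj}) (equivalently $\nabla F^*=\nabla F^T$) to read off $(a_{(1)},x_{(1)})^T=(x,a)^T\cdot y_{(1)}$. The paper presents the inner-product identity as the derivation itself rather than as a separate check, but the content is identical.
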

\begin{proof}
Differentiation of $y=a \cdot x$ with respect to
$a$ and $x$ yields the tangent
$$
y^{(1)}=
\left \langle 
\begin{pmatrix}
a^{(1)} \\
x^{(1)} 
\end{pmatrix} ,
\begin{pmatrix} x \\ a \end{pmatrix}  \right \rangle
$$
for 
$y^{(1)},a^{(1)},x^{(1)} \in \R.$ 
Eqn.~(\ref{eqn:adj}) implies
\begin{align*}
\langle y^{(1)}, y_{(1)} \rangle &= 
y^{(1)} \cdot y_{(1)}= \left \langle 
\begin{pmatrix}
a^{(1)} \\
x^{(1)} 
\end{pmatrix},
\begin{pmatrix}
a_{(1)} \\
x_{(1)} 
\end{pmatrix}
\right \rangle =
\left \langle 
\begin{pmatrix}
a^{(1)} \\
x^{(1)} 
\end{pmatrix} ,
\begin{pmatrix} x \\ a \end{pmatrix}  \right \rangle
\cdot y_{(1)} 
\end{align*}
yielding $$ \begin{pmatrix}a_{(1)} \\x_{(1)} \end{pmatrix} = \begin{pmatrix} x \\ a \end{pmatrix} \cdot y_{(1)} $$ 
and hence Eqn.~(\ref{eqn:ax}). 
\end{proof}

\begin{lemma} \label{lem:aTx}
The adjoint of 
an inner vector product 
$$y\ass \langle \A, \X \rangle \equiv \A^T \cdot \X = \sum_{i=0}^{n-1} a_i \cdot x_i$$ 
with active inputs $\A \in \R^n$ and $\X \in \R^n$ 
yielding the active output $y \in \R$ is computed as
\begin{equation} \label{eqn:aTx}
\begin{split}
\A_{(1)} &\ass \X \cdot y_{(1)} \\
\X_{(1)} &\ass \A \cdot y_{(1)}
\end{split} 
\end{equation} 
for $y_{(1)} \in \R$ yielding
$\A_{(1)} \in \R^n $ and $\X_{(1)} \in \R^n.$ 
\end{lemma}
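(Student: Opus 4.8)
The plan is to follow the exact same template used in the proof of Lemma~\ref{lem:ax}, now generalizing from scalars to vectors. The inner product $y = \langle \A, \X \rangle = \sum_{i=0}^{n-1} a_i \cdot x_i$ is a scalar-valued function of the $2n$ active inputs $(\A, \X)$, so its tangent is a single directional derivative obtained by differentiation.

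First I would compute the tangent of $y = \A^T \cdot \X$ by applying the product rule coordinate-wise. Since $\frac{\partial y}{\partial a_i} = x_i$ and $\frac{\partial y}{\partial x_i} = a_i$, the Jacobian is the row vector $(\X^T, \A^T)$, and the tangent reads
\begin{equation*}
y^{(1)} = \left\langle
\begin{pmatrix} \A^{(1)} \\ \X^{(1)} \end{pmatrix},
\begin{pmatrix} \X \\ \A \end{pmatrix}
\right\rangle,
\end{equation*}
which is the direct vector analogue of the scalar tangent in Lemma~\ref{lem:ax}.

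Next I would invoke the adjoint identity Eqn.~(\ref{eqn:adj}) with the scalar output $y$. Writing out $\langle y^{(1)}, y_{(1)} \rangle = y^{(1)} \cdot y_{(1)}$ and substituting the tangent expression, I would compare against the defining inner product $\langle (\A^{(1)}, \X^{(1)}), (\A_{(1)}, \X_{(1)}) \rangle$ on the input space $\R^{2n}$. Because this equality must hold for every choice of the tangent directions $(\A^{(1)}, \X^{(1)})$, I can read off
\begin{equation*}
\begin{pmatrix} \A_{(1)} \\ \X_{(1)} \end{pmatrix}
= \begin{pmatrix} \X \\ \A \end{pmatrix} \cdot y_{(1)},
\end{equation*}
which is exactly Eqn.~(\ref{eqn:aTx}).

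I do not anticipate a genuine obstacle here, since the argument is structurally identical to the scalar case; the only point requiring minor care is the bookkeeping of the stacked $2n$-dimensional inner product on the input side versus the scalar inner product on the output side, and the justification that matching the bilinear form for all tangent directions uniquely determines the adjoint (this is just non-degeneracy of the Euclidean inner product). The scalar multiplication result of Lemma~\ref{lem:ax} can in fact be viewed as the $n=1$ special case, so the present lemma is the natural lifting of that base line to vectors.
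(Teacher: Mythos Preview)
Your proposal is correct and follows essentially the same approach as the paper: compute the tangent of $y=\A^T\cdot\X$ as the inner product of the stacked direction $(\A^{(1)},\X^{(1)})$ with $(\X,\A)$, then invoke Eqn.~(\ref{eqn:adj}) and match against the bilinear form on $\R^{2n}$ to read off $(\A_{(1)},\X_{(1)})=(\X,\A)\cdot y_{(1)}$. The paper additionally spells out the coordinate-wise sum $\sum_i(x_i a^{(1)}_i+a_i x^{(1)}_i)$ before repackaging it as $(\X^T~\A^T)\cdot\begin{pmatrix}\A^{(1)}\\\X^{(1)}\end{pmatrix}$, but this is only a notational expansion of what you wrote.
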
 
\begin{proof}
Differentiation of $y=\A^T \cdot \X,$ for
$\A=(a_i)_{i=0,\ldots,n-1}$ and $\X=(x_i)_{i=0,\ldots,n-1}$,
with respect to
$\A$ and $\X$ yields the tangent 
\begin{align*}
y^{(1)}&=\sum_{i=0}^{n-1} (x_i~a_i) \cdot 
\begin{pmatrix}
a^{(1)}_i \\
x^{(1)}_i 
\end{pmatrix} =
\sum_{i=0}^{n-1} \left ( x_i \cdot a^{(1)}_i + a_i \cdot x^{(1)}_i \right )\\ &=
\sum_{i=0}^{n-1} x_i \cdot a^{(1)}_i + \sum_{i=0}^{n-1} x^{(1)}_i \cdot a_i 
=
\X^T \cdot \A^{(1)} + \A^T \cdot \X^{(1)} =
(\X^T~\A^T) \cdot \begin{pmatrix}
\A^{(1)} \\
\X^{(1)} 
\end{pmatrix}\; .
\end{align*}
Eqn.~(\ref{eqn:adj}) implies
$$
y_{(1)} \cdot y^{(1)}=
(\A_{(1)}^T~\X_{(1)}^T) \cdot 
\begin{pmatrix}
\A^{(1)} \\
\X^{(1)} 
\end{pmatrix}
=
y_{(1)} \cdot (\X^T~\A^T) \cdot 
\begin{pmatrix}
\A^{(1)} \\
\X^{(1)} 
\end{pmatrix}
$$
yielding $(\A_{(1)}^T~\X_{(1)}^T)=y_{(1)} \cdot (\X^T~\A^T)$
and hence Eqn.~(\ref{eqn:aTx}). 
\end{proof}

The following derivation of adjoint matrix-vector and matrix-matrix
products relies on serialization of matrices.
Individual rows of a matrix $A \in \R^{m \times n}$ are denoted as 
$\A_i \in \R^{1 \times n}$ for $i=0,\ldots,m-1;$
columns are denoted as  $\A^j \in \R^m$ for $i=0,\ldots,n-1.$
(Row) Vectors in $\R^{1 \times n}$ are denoted as 
$\left (v_j \right )^{j=0,\ldots,n-1};$
(column) vectors in $\R^m$ are denoted as 
$\left (v_i \right )_{i=0,\ldots,m-1};$
Consequently, a row-major serialization of $A$ is given 
by $\left (\A^T_i \right )_{i=0,\ldots,m-1}.$
A column-major serialization of $A$ is given by 
$\left (\A^j \right )_{j=0,\ldots,n-1}.$ 
Tangents and adjoints of the individual entries of $A$ define
$$A^{(1)}=(\A^{(1)}_i)_{i=0,\ldots,m-1}=(a^{(1)}_{i,j})_{i=0,\ldots,m-1}^{j=0,\ldots,n-1}$$ and
$$A_{(1)}=(\A_{(1)i})_{i=0,\ldots,m-1}=(a_{(1)i,j})_{i=0,\ldots,m-1}^{j=0,\ldots,n-1},$$ 
respectively. 

\begin{lemma} \label{lem:Ax}
The adjoint of 
a matrix-vector product 
$$\Y\ass A \cdot \X \equiv \left ( \A_i \cdot \X \right )_{i=0,\ldots,m-1}$$ 
with active inputs $A\in \R^{m \times n}$ 
and $\X \in \R^n$ yielding the active output $\Y\in \R^m$ is computed as
\begin{equation} \label{eqn:Ax}
\begin{split}
\X_{(1)} &\ass A^T \cdot \Y_{(1)} \\
A_{(1)} &\ass \Y_{(1)} \cdot \X^T
\end{split} 
\end{equation}
for $\Y_{(1)}\in \R^m$ yielding
$\X_{(1)} \in \R^n$ and 
$A_{(1)}\in \R^{m \times n}.$ 
\end{lemma}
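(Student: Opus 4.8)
The plan is to exploit the row-wise structure of the matrix--vector product and reduce it to $m$ applications of Lemma~\ref{lem:aTx}. First I would write $\Y \ass A \cdot \X$ componentwise as $y_i \ass \A_i \cdot \X$ for $i=0,\ldots,m-1$, so that each output entry is an inner vector product of the (transposed) row $\A_i^T \in \R^n$ with the shared input $\X \in \R^n$. Each such entry is exactly an instance of the situation treated in Lemma~\ref{lem:aTx}, with $\A_i^T$ playing the role of $\A$ and $y_i$ the role of $y$. The serialization conventions introduced just above the lemma are what make this reduction precise.

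Next I would apply Lemma~\ref{lem:aTx} to the $i$-th inner product. This yields the adjoint of the $i$-th row, $(\A_i)_{(1)} \ass (\Y_{(1)})_i \cdot \X^T \in \R^{1 \times n}$, together with a contribution $\A_i^T \cdot (\Y_{(1)})_i \in \R^n$ to the adjoint of $\X$. For the matrix adjoint $A_{(1)}$ the rows are independent, since each row $\A_i$ feeds only into $y_i$; stacking the row adjoints $(\A_i)_{(1)}$ in row-major order therefore gives $A_{(1)}$ as the matrix whose $i$-th row is $(\Y_{(1)})_i \cdot \X^T$. Recognizing this stack as an outer product yields $A_{(1)} \ass \Y_{(1)} \cdot \X^T$, the second line of Eqn.~(\ref{eqn:Ax}).

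The vector adjoint $\X_{(1)}$ is the delicate part, and I expect it to be the main obstacle. Since $\X$ is an alias-free but \emph{shared} input that enters all $m$ inner products, the chain rule forces its adjoint to accumulate the contributions arising from every output, giving $\X_{(1)} \ass \sum_{i=0}^{m-1} \A_i^T \cdot (\Y_{(1)})_i$. The remaining task is to recognize this sum as a single matrix--vector product: reading the terms in column-major order, $\A_i^T$ is precisely the $i$-th column of $A^T$, so the sum equals $A^T \cdot \Y_{(1)}$, which is the first line of Eqn.~(\ref{eqn:Ax}). The care needed here is twofold, namely to justify the summation rule for the repeated argument $\X$ (as opposed to the single-use rows, which require no accumulation) and to keep the row/column serialization conventions consistent so that the column-major regrouping is unambiguous.
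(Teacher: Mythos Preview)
Your argument is correct, but it follows a different route from the paper's. The paper first derives the \emph{tangent} $\Y^{(1)}=A^{(1)}\cdot\X+A\cdot\X^{(1)}$ from the row-wise inner-product structure, and then obtains the adjoint in one shot by invoking the duality identity Eqn.~(\ref{eqn:adj}), carefully matching the serialized entries of $A_{(1)}$ and $\X_{(1)}$ against the corresponding tangent terms. You instead apply the \emph{adjoint} conclusion of Lemma~\ref{lem:aTx} directly to each row and then assemble: stacking the independent row adjoints gives $A_{(1)}$, and summing the per-row contributions to the shared argument gives $\X_{(1)}$. Your approach is the more compositional, practitioner-style AD argument and avoids the serialization bookkeeping the paper carries through; the paper's approach, on the other hand, stays entirely within the alias- and context-free setting stated in the Prerequisites, so it never needs the accumulation rule you invoke. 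That rule is precisely where your argument steps outside the hypotheses of Lemma~\ref{lem:aTx} (there $\X$ is context-free, here it is shared across $m$ outputs); you identify this correctly as the delicate point, and it is justified by the chain rule, but note that the paper does not state it as a separate lemma---its duality-based derivation sidesteps the issue.
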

\begin{proof}
Differentiation of $\Y=A \cdot \X,$ 
where $A=\left (\A_i \right )_{i=0,\ldots,m-1},$ 
$\X=\left (x_j \right )_{j=0,\ldots,n-1}$ and $\Y=\left (y_i \right )_{i=0,\ldots,m-1},$ with respect to
$A$ and $\X$ yields the tangent 
\begin{align*}
\Y^{(1)}&=\left (  
\left \langle
\begin{pmatrix}
\X \\
\A^T_i
\end{pmatrix}
,\begin{pmatrix}
{\A_i^{(1)}}^T \\
\X^{(1)} 
\end{pmatrix}
\right \rangle
\right )_{i=0,\ldots,m-1} 
=
\left (  
\X^T \cdot {\A_i^{(1)}}^T+\A_i \cdot \X^{(1)}
\right )_{i=0,\ldots,m-1} \\
&=
\left (  
\X^T \cdot {\A_i^{(1)}}^T 
\right )_{i=0,\ldots,m-1} 
+
\left (  
\A_i \cdot \X^{(1)}
\right )_{i=0,\ldots,m-1} \\
&=
\left (  
\A_i^{(1)} \cdot \X 
\right )_{i=0,\ldots,m-1} 
+
\left (  
\A_i \cdot \X^{(1)}
\right )_{i=0,\ldots,m-1} \\
&=
\left (  
\A_i^{(1)} 
\right )_{i=0,\ldots,m-1} \cdot \X 
+
\left (  
\A_i 
\right )_{i=0,\ldots,m-1} \cdot \X^{(1)} 
=A^{(1)} \cdot \X + A \cdot \X^{(1)} \; .
\end{align*}
Eqn.~(\ref{eqn:adj}) implies
\begin{align*}
\left \langle \Y_{(1)}, \Y^{(1)} \right \rangle &=
\left \langle
\begin{pmatrix}
\left (\A^T_{(1)i} \right )_{i=0,\ldots,m-1} \\
\X_{(1)}
\end{pmatrix}, 
\begin{pmatrix}
\left (\A^{(1)T}_i \right )_{i=0,\ldots,m-1} \\
\X^{(1)} 
\end{pmatrix}
\right \rangle \\
&=
\left (\A^T_{(1)i} \right )^T_{i=0,\ldots,m-1} \cdot \left (\A^{(1)T}_i \right )_{i=0,\ldots,m-1} + \X_{(1)}^T \cdot \X^{(1)} \\
&=\Y_{(1)}^T \cdot \left (A^{(1)} \cdot \X + A \cdot \X^{(1)} \right ) 
=\Y_{(1)}^T \cdot A^{(1)} \cdot \X + \Y_{(1)}^T \cdot A \cdot \X^{(1)} \\
&=\left ( \left ( y_{(1)i} \cdot \A^{(1)T}_i \right )_{i=0,\ldots,m-1} \right )^T
\cdot \left (\X \right )_{i=0,\ldots,m-1} + \Y_{(1)}^T \cdot A \cdot \X^{(1)} \\
&=\underset{=\left ( (\A^T_{(1)i})_{i=0,\ldots,n-1}\right )^T}{\underbrace{\left ( \left ( y_{(1)i} \cdot \X \right )_{i=0,\ldots,m-1} \right )^T}}
\cdot \left (\A^{(1)T}_i \right )_{i=0,\ldots,m-1} + \underset{=\X^T_{(1)}}{\underbrace{\Y_{(1)}^T \cdot A}} \cdot \X^{(1)} \; ,
\end{align*}
where $\left (\X \right )_{i=0,\ldots,m-1} \in \R^{m \cdot n}$ denotes a 
concatenation of $m$ copies of $\X \in \R^n$ as a column vector.
Eqn.~(\ref{eqn:Ax}) follows immediately.
\end{proof}
\begin{lemma} \label{lem:AX}
The adjoint of a matrix-matrix product $Y \ass A \cdot X$ 
with active inputs
$A \in \R^{m \times p},$ 
	$X \in \R^{p \times n}$ yielding the active output 
$Y \in \R^{m \times n}$ is computed as
\begin{equation} \label{eqn:AX}
\begin{split}
A_{(1)} &\ass Y_{(1)} \cdot X^T \\
X_{(1)} &\ass A^T \cdot Y_{(1)}  
\end{split}
\end{equation}
for 
$Y_{(1)} \in \R^{m \times n}$ yielding 
$A_{(1)} \in \R^{m \times p}$ and $X_{(1)} \in \R^{p \times n}.$ 
\end{lemma}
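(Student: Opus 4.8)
The plan is to reduce the matrix-matrix product to the matrix-vector product of Lemma~\ref{lem:Ax} by splitting $Y \ass A \cdot X$ into its columns. Writing $X^j \in \R^p$ and $Y^j \in \R^m$ for the $j$-th columns of $X$ and $Y$, the product is equivalent to the family of matrix-vector products $Y^j \ass A \cdot X^j$ for $j=0,\ldots,n-1$, in which the single matrix $A$ is reused for every column while each $X^j$ enters only its own product. This asymmetry between the shared input $A$ and the per-column inputs $X^j$ is the structural feature that drives the argument.

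Applying Lemma~\ref{lem:Ax} to the $j$-th column yields $X^j_{(1)} \ass A^T \cdot Y^j_{(1)}$ together with a contribution $Y^j_{(1)} \cdot (X^j)^T$ to the adjoint of $A$. Since each $X^j$ is used exactly once, its adjoint is immediate, and stacking the vectors $A^T \cdot Y^j_{(1)}$ column-major recovers $X_{(1)} \ass A^T \cdot Y_{(1)}$ directly from the definition of the matrix product, because $A^T$ acts identically on each column. For $A$, the chain rule requires summing the per-column contributions, as $A$ is an input to all $n$ products; this is the same adjoint-accumulation phenomenon that already appeared for the shared vector in Lemma~\ref{lem:Ax}, now lifted from a shared vector to a shared matrix. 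Thus $A_{(1)} \ass \sum_{j=0}^{n-1} Y^j_{(1)} \cdot (X^j)^T$.

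The remaining step, which I expect to be the crux, is to collapse this sum of outer products into a single level-3 BLAS operation. Comparing entries, the $(i,k)$ component of $\sum_{j=0}^{n-1} Y^j_{(1)} \cdot (X^j)^T$ equals $\sum_{j=0}^{n-1} (Y_{(1)})_{i,j} \, X_{k,j}$, which coincides with the $(i,k)$ component of $Y_{(1)} \cdot X^T$; hence $A_{(1)} \ass Y_{(1)} \cdot X^T$ and Eqn.~(\ref{eqn:AX}) follows. The main effort therefore lies not in any single differentiation but in the bookkeeping of the shared input $A$ and in verifying that its accumulated adjoint, a priori a sum over columns, simplifies exactly to the compact matrix product advertised in the statement.
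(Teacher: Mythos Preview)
Your argument is correct and follows essentially the same route as the paper: both proofs decompose $Y=A\cdot X$ column-wise into matrix-vector products and read off the adjoint from Lemma~\ref{lem:Ax}. The only cosmetic difference is that the paper re-applies the inner-product identity~(\ref{eqn:adj}) to each column rather than citing Lemma~\ref{lem:Ax} as a black box, and it leaves the accumulation $A_{(1)}=\sum_j \Y^j_{(1)}\,(\X^j)^T=Y_{(1)}\cdot X^T$ implicit where you spell it out entrywise.
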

\begin{proof}
Differentiation of $Y=A \cdot X,$ where
$A=\left (\A_i \right )_{i=0,\ldots,m-1}$, $X=\left (\X^k \right )^{k=0,\ldots,p-1}$ and $Y=\left (\Y^k \right )^{k=0,\ldots,p-1},$
with respect to
$A$ and $X$ yields tangents
\begin{align*}
\Y^{(1)k}&=\left (  
\left \langle
\begin{pmatrix}
\X^k \\
\A^T_i
\end{pmatrix}
,\begin{pmatrix}
{\A_i^{(1)}}^T \\
\X^{(1)k} 
\end{pmatrix}
\right \rangle
\right )_{i=0,\ldots,m-1} 
=
A^{(1)} \cdot \X^k + A \cdot \X^{(1)k} \; .
\end{align*}
for $k=0,\ldots,p-1$ and hence
$$
Y^{(1)}=A^{(1)} \cdot X + A \cdot X^{(1)} \; .
$$
Eqn.~(\ref{eqn:adj}) implies
\begin{align*}
\left \langle \Y_{(1)}^k, \Y^{(1)k} \right \rangle &=
\left \langle
\begin{pmatrix}
\left (\A^T_{(1)i} \right )_{i=0,\ldots,m-1} \\
\X_{(1)k}
\end{pmatrix}, 
\begin{pmatrix}
\left (\A^{(1)T}_i \right )_{i=0,\ldots,m-1} \\
\X^{(1)k} 
\end{pmatrix}
\right \rangle \\
&=\underset{=\left ( (\A^T_{(1)i})_{i=0,\ldots,m-1}\right )^T}{\underbrace{\left ( \left ( y_{(1)i}^k \cdot \X^k \right )_{i=0,\ldots,m-1} \right )^T}}
	\cdot \left (\A^{(1)T}_i \right )_{i=0,\ldots,m-1} + \underset{=\X^{k^T}_{(1)}}{\underbrace{\Y_{(1)}^{k^T} \cdot A}} \cdot \X^{(1) k}
\end{align*}
for $k=0,\ldots,p-1$ and hence the Eqn.~(\ref{eqn:AX}). 
\end{proof}

\section{Systems of Linear Equations Revisited}

Lemmas~\ref{lem:ls1} and \ref{lem:ls2} form the basis for the new proof of
Eqn.~(\ref{als}).

\begin{lemma} \label{lem:ls1}
The tangent $$Y^{(1)} = A \cdot X^{(1)} \cdot B$$
of $Y \ass A \cdot X \cdot B$ 
for active $X \in \R^{n \times q},$ $Y \in \R^{m \times p}$ and
passive $A \in \R^{m \times n},$ $B \in \R^{q \times p}$ implies
the adjoint 
$$X_{(1)} = A^T \cdot Y_{(1)} \cdot B^T \; .$$ 
\end{lemma}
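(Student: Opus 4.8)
The plan is to exploit the factorization of the triple product $Y \ass A \cdot X \cdot B$ into two ordinary matrix--matrix products and to propagate the adjoint backwards through them by two successive applications of Lemma~\ref{lem:AX}. Introducing the intermediate variable $Z \ass A \cdot X \in \R^{m \times q}$ so that $Y \ass Z \cdot B$, the quantity $Z$ is active (it carries the dependence on $X$) whereas $A$ and $B$ stay passive throughout. Consequently, at each of the two products only the adjoint of the single active factor has to be recorded, and the matching adjoint equation from Eqn.~(\ref{eqn:AX}) can be read off directly.

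First I would treat the outer product $Y = Z \cdot B$, in which $Z$ plays the role of the active left factor and $B$ the passive right factor. The first adjoint equation of Lemma~\ref{lem:AX} then contributes $Z_{(1)} \ass Y_{(1)} \cdot B^T$. Next I would treat the inner product $Z = A \cdot X$, in which $X$ is the active right factor and $A$ the passive left factor; the second adjoint equation of Lemma~\ref{lem:AX} yields $X_{(1)} \ass A^T \cdot Z_{(1)}$. Substituting the expression for $Z_{(1)}$ from the first step gives
\begin{equation*}
X_{(1)} \ass A^T \cdot Z_{(1)} \ass A^T \cdot Y_{(1)} \cdot B^T \; ,
\end{equation*}
which is precisely the claimed adjoint.

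The substance of the argument is the reversal of composition order: the adjoint of the composite tangent map $X^{(1)} \mapsto A \cdot X^{(1)} \cdot B$ is the composition of the individual adjoints taken in the opposite order, as dictated by Eqn.~(\ref{eqn:adj}) together with the associativity of the chain rule emphasized in Section~\ref{sec:motivation}. As an independent check, the identity follows in one line from Eqn.~(\ref{eqn:adj}) using the Frobenius inner product $\langle U, V \rangle = \mathrm{tr}(U^T \cdot V)$: cyclic invariance of the trace gives $\langle A \cdot X^{(1)} \cdot B, Y_{(1)} \rangle = \langle X^{(1)}, A^T \cdot Y_{(1)} \cdot B^T \rangle$, and comparison with $\langle X^{(1)}, X_{(1)} \rangle$ identifies $X_{(1)}$ at once.

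The main obstacle I anticipate is bookkeeping rather than computation. One must justify that, with $A$ and $B$ passive, it is legitimate to retain only the single relevant adjoint equation of Lemma~\ref{lem:AX} at each stage, and that the intermediate adjoint $Z_{(1)}$ is indeed the correct quantity linking the two applications. Once the active/passive roles are pinned down and the reverse ordering of the adjoint composition is respected, the derivation is immediate.
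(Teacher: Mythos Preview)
Your proposal is correct and follows essentially the same approach as the paper: introduce the intermediate $Z \ass A \cdot X$, apply Lemma~\ref{lem:AX} to $Y = Z \cdot B$ with passive $B$ to obtain $Z_{(1)} = Y_{(1)} \cdot B^T$, then to $Z = A \cdot X$ with passive $A$ to obtain $X_{(1)} = A^T \cdot Z_{(1)}$, and substitute. The additional Frobenius/trace check is not in the paper but is a harmless sanity verification.
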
 
\begin{proof}
$$
Y^{(1)} = Z^{(1)} \cdot B \quad \Rightarrow \quad 
Z_{(1)} = Y_{(1)} \cdot B^T 
$$ 
follows from application of Lemma~\ref{lem:AX} to $Y=Z \cdot B$ with
passive $B.$
$$
Z^{(1)} = A \cdot X^{(1)} \quad \Rightarrow \quad 
X_{(1)} = A^T \cdot Z_{(1)} 
$$
follows from application of Lemma~\ref{lem:AX} to $Z=A \cdot X$ with
passive $A.$ Substitution of $Z^{(1)}$ and $Z_{(1)}$ yields 
Lemma~\ref{lem:ls1}. 
\end{proof}

\begin{lemma} \label{lem:ls2}
The tangent
$$
	Y^{(1)} = \sum_{i=0}^{k-1} A_i \cdot X_i^{(1)} \cdot B_i 
$$ 
of $
Y = \sum_{i=0}^{k-1} A_i \cdot X_i \cdot B_i = \sum_{i=0}^{k-1} Y_i 
$
with active
$X_i \in \R^{n_i \times q_i},$ $Y \in \R^{m \times p}$
and with passive
$A_i \in \R^{m \times n_i},$ $B_i \in \R^{q_i \times p}$
implies the adjoint
$$
	X_{i(1)} = A_i^T \cdot Y_{(1)} \cdot B_i^T
$$ for $i=0,\ldots,k-1$
\end{lemma}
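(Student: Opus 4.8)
The plan is to reduce the $k$-term sum to $k$ independent applications of Lemma~\ref{lem:ls1}. Written for the $k$ active inputs $X_0,\ldots,X_{k-1}$ and the single active output $Y$, the adjoint identity Eqn.~(\ref{eqn:adj}) reads
\begin{equation*}
\langle Y^{(1)}, Y_{(1)} \rangle = \sum_{i=0}^{k-1} \langle X_i^{(1)}, X_{i(1)} \rangle \; ,
\end{equation*}
where $\langle\cdot,\cdot\rangle$ denotes the (Frobenius) inner product on matrices obtained by the serialization introduced for the matrix-valued BLAS above.

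First I would substitute the given tangent $Y^{(1)} = \sum_{i=0}^{k-1} A_i \cdot X_i^{(1)} \cdot B_i$ into the left-hand side and use bilinearity of the inner product to obtain $\sum_{i=0}^{k-1} \langle A_i \cdot X_i^{(1)} \cdot B_i, Y_{(1)} \rangle = \sum_{i=0}^{k-1} \langle X_i^{(1)}, X_{i(1)} \rangle$. Because the active inputs $X_i$ are distinct and, by the alias- and context-free assumption introduced in the prerequisites, mutually independent, the tangents $X_i^{(1)}$ may be varied one at a time; matching the $i$-th summand on both sides then yields the per-term identity $\langle A_i \cdot X_i^{(1)} \cdot B_i, Y_{(1)} \rangle = \langle X_i^{(1)}, X_{i(1)} \rangle$ for each $i$.

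This per-term identity is precisely the adjoint relation for the single product map $Y_i \ass A_i \cdot X_i \cdot B_i$ with passive $A_i, B_i$ and active $X_i$. Hence Lemma~\ref{lem:ls1} applies verbatim to each term and delivers $X_{i(1)} = A_i^T \cdot Y_{(1)} \cdot B_i^T$ for $i=0,\ldots,k-1$. Equivalently, one may introduce intermediates $Y_i \ass A_i \cdot X_i \cdot B_i$ so that $Y = \sum_{i=0}^{k-1} Y_i$; the adjoint of summation propagates the output adjoint unchanged as $Y_{i(1)} = Y_{(1)}$, after which Lemma~\ref{lem:ls1} finishes each branch.

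I expect the only delicate point to be the term-by-term matching, namely establishing that independence of the distinct active inputs $X_i$ justifies equating individual summands rather than merely their totals. Once this reduction is secured, each term follows immediately from Lemma~\ref{lem:ls1} and no further computation is required.
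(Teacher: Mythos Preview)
Your proposal is correct and, in fact, your ``equivalently'' remark at the end is precisely the paper's proof: introduce $Y_i \ass A_i \cdot X_i \cdot B_i$, observe that $Y=\sum_i Y_i$ gives identity Jacobians so that $Y_{i(1)}=Y_{(1)}$, and then apply Lemma~\ref{lem:ls1} to each branch. Your primary argument via the Frobenius inner product and term-by-term matching is just a more explicit unfolding of the same reasoning; the paper skips straight to the intermediate-variable formulation.
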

\begin{proof}
From $$Y_i^{(1)} = A_i \cdot X_i^{(1)} \cdot B_i$$
follows with 
Lemma~\ref{lem:ls1}
$$X_{i(1)} = A^T_i \cdot Y_{i(1)} \cdot B^T_i$$
for $i=0,\ldots,k-1.$ Moreover,
$Y^{(1)} = \sum_{i=0}^{k-1} Y^{(1)}_i$ implies
$Y_{i(1)} = Y_{(1)}$ due to identity Jacobians of $Y$ with respect to $Y_i$
for $i=0,\ldots,k-1$ and hence 
Lemma~\ref{lem:ls2}. 
\end{proof}

\begin{theorem} \label{the}
Adjoints of systems $A \cdot \X = \B$ of $n$ linear equations with invertible 
$A \in \R^{n \times n}$ and right-hand side $\B \in \R^n$
are evaluated at the 
primal solution $\X \ass A^{-1} \cdot \B \in \R^n$
by Eqn.~(\ref{als}).
\end{theorem}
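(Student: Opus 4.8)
The plan is to reduce the statement to Lemma~\ref{lem:ls2} by differentiating the defining equation of the linear system and rearranging the resulting tangent into the sum-of-products form treated there.

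First I would differentiate the identity $A \cdot \X = \B$ at the primal solution $\X \ass A^{-1} \cdot \B.$ Treating $A$ and $\B$ as the active inputs and $\X$ as the active output, the product rule yields
$$A^{(1)} \cdot \X + A \cdot \X^{(1)} = \B^{(1)} \; ,$$
and since $A$ is invertible this can be solved for the tangent of the primal solution as
$$\X^{(1)} = A^{-1} \cdot \B^{(1)} - A^{-1} \cdot A^{(1)} \cdot \X \; .$$

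Next I would recognize this expression as a sum of two terms of the form handled by Lemma~\ref{lem:ls2}. Writing $\B \in \R^{n \times 1},$ the first term $A^{-1} \cdot \B^{(1)}$ matches the pattern with active factor $\B,$ passive left factor $A^{-1},$ and passive right factor equal to the $1 \times 1$ identity. The second term $-A^{-1} \cdot A^{(1)} \cdot \X$ matches the pattern with active factor $A,$ passive left factor $-A^{-1},$ and passive right factor $\X;$ here the essential observation is that at the primal solution the value $\X$ is fixed and therefore enters as a passive matrix.

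Applying Lemma~\ref{lem:ls2} with $k=2$ then immediately produces the two adjoints
$$\B_{(1)} = A^{-T} \cdot \X_{(1)} \quad \text{and} \quad A_{(1)} = -A^{-T} \cdot \X_{(1)} \cdot \X^T \; ,$$
and substituting the first relation into the second gives $A_{(1)} = -\B_{(1)} \cdot \X^T,$ which is exactly Eqn.~(\ref{als}). I expect the only genuine subtlety to be the justification for treating the solution value $\X$ on the right-hand side of the second term as passive; once this is granted, the result follows mechanically from Lemma~\ref{lem:ls2}.
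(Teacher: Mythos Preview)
Your proposal is correct and follows essentially the same route as the paper's own proof: differentiate $A \cdot \X = \B$, solve for $\X^{(1)}$, cast the result as a two-term sum in the form of Lemma~\ref{lem:ls2}, read off the adjoints, and substitute $\B_{(1)}$ into the expression for $A_{(1)}$. The paper inserts an explicit identity as the right factor of the $\B^{(1)}$ term (writing $A^{-1}\cdot\B^{(1)}\cdot I_n$) whereas you use the $1\times 1$ identity after viewing $\B\in\R^{n\times 1}$, and you flag the passivity of the evaluated $\X$ explicitly; neither of these is a substantive difference.
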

\begin{proof}
Differentiation of $A \cdot \X=\B$ with respect to $A$ and $\B$ yields
the tangent system
$$
A^{(1)} \cdot \X +
A \cdot \X^{(1)}=\B^{(1)}
$$
which implies
$$
\X^{(1)} \ass A^{-1} \cdot \B^{(1)} \cdot I_n- A^{-1} \cdot A^{(1)} \cdot \X 
$$
with identity $I_n \in \R^{n \times n}.$ 
Lemma~\ref{lem:ls2} yields 
	\begin{align*}
		\B_{(1)} &\ass A^{-T} \cdot \X_{(1)} \cdot I_n^T\\
		A_{(1)} &\ass -\underset{=\B_{(1)}}{\underbrace{A^{-T} \cdot \X_{(1)}}} \cdot \X^T 
	\end{align*}
and hence
Eqn.~(\ref{als}).
\end{proof}

\section{Conclusion}
\label{sec:conclusion}

As observed previously by various authors a possibly available factorization 
of $A$ can be reused both for the tangent 
($A \cdot \X^{(1)}=\B^{(1)}-A^{(1)} \cdot \X$) 
and the adjoint ($A^T \cdot \B_{(1)} = \X_{(1)}$) systems. The additional
worst case computational cost of $O(n^3)$ can thus be reduced to $O(n^2).$
Higher-order tangents [adjoints] of linear systems amount to repeated 
solutions of linear systems with the same [transposed] system matrix combined 
with tangent [adjoint] BLAS.

\end{document}